\numberwithin{equation}{section}
\newtheorem{theorem}{Theorem}[section]
\newtheorem{proposition}[theorem]{Proposition}
\theoremstyle{definition}
\newtheorem{definition}[theorem]{Definition}
\theoremstyle{remark}
\newtheorem{remark}[theorem]{Remark}
\newcommand{\fgrade}{\operatorname{fgrade}}
\newcommand{\Spec}{\operatorname{Spec}}
\newcommand{\Ht}{\operatorname{ht}}
\newcommand{\Ext}{\operatorname{Ext}}
\newcommand{\Supp}{\operatorname{Supp}}
\newcommand{\Hom}{\operatorname{Hom}}
\newcommand{\drdepth}{\operatorname{DR-depth}}
\newcommand{\cd}{\operatorname{cd}}
\newcommand{\Fdepth}{\operatorname{F-depth}}
\newcommand{\vpl}{\operatornamewithlimits{\varprojlim}}
\newcommand{\vil}{\operatornamewithlimits{\varinjlim}}
\newcommand{\fm}{\frak{m}}
\newcommand{\fp}{\frak{p}}
\newcommand{\fn}{\frak{n}}
\begin{document}

\title[Equality of de Rham depth and formal grade]
{On the equality of de Rham depth and formal grade in characteristic zero}

\author[Eghbali]{Majid Eghbali \   }

\address{Department of Mathematics, Tafresh University, Tafresh, 39518--79611, Iran.}
\email{m.eghbali@tafreshu.ac.ir}
\email{m.eghbali@yahoo.com}

\thanks{The author is supported in part by a grant from Tafresh University}.

\subjclass[2010]{13D45, 14F40.}

\keywords{formal grade, de Rham depth.}

\begin{abstract}
Let $Y \subset \mathbb{P}^n_k$ be a nonsingular proper closed subset of projective $n$-space over a field $k$ of characteristic zero and let $I \subset R=k[x_0, \ldots, x_n]$ be the homogeneous defining ideal of $Y$. We show that in this case, the de Rham depth of $Y$ is the same as the so-called formal grade of $I$ in $R$.
\end{abstract}

\maketitle

\section{Introduction}

Let $Y \subset \mathbb{P}^n_k$ be a proper closed subset and
let $I \subset R=k[x_0, \ldots, x_n]$ be the homogeneous defining
ideal of $Y$ over a field $k$ with homogeneous maximal ideal $\fm=(x_0, \ldots, x_n)$.

In \cite{E} the author has shown that in the case of an $F$-finite regular local ring $A$ of positive characteristic the formal grade of $I$ in $A$,  $\fgrade (I,A):=\min \{i| \vpl H^i_{\fm}(R/I^t) \neq 0\}$ is the same as $\Fdepth Y$, Frobenius depth of $Y=\Spec (A/I)$. In this note we try to consider its analogue in characteristic zero case 
and show that the so called de Rham depth of $Y$,  $\drdepth Y$ is the same as  $\fgrade (I,R)$. To be more precise we show the following:

\begin{theorem}\label{1}
Let $Y \subset \mathbb{P}^n_k$ be a nonsingular proper closed subset of projective $n$-space over a field $k$ of equicharacteristic zero and let $I \subset R=k[x_0, \ldots, x_n]$ be the homogeneous defining
ideal of $Y$. Then
 $\drdepth Y = \fgrade (I,R)$.
\end{theorem}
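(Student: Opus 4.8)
\emph{Strategy.} The plan is to route both invariants through the local cohomological dimension $\cd(R,I):=\max\{j : H^j_I(R)\neq 0\}$ of the affine cone $\Spec(R/I)=C(Y)\subset\mathbb A^{n+1}_k$ over $Y$, and then to match them.

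\smallskip

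\emph{Step 1: formal grade via cohomological dimension.} Let $\omega_R=R(-n-1)$ be the graded canonical module and $(-)^{\vee}=\Hom_R(-,E)$ the graded Matlis dual, $E$ the graded injective hull of $R/\fm$. Graded local duality gives, for each $t$, an isomorphism $H^i_{\fm}(R/I^t)^{\vee}\cong\Ext^{\,n+1-i}_R(R/I^t,\omega_R)$ compatible with the surjections $R/I^{t+1}\twoheadrightarrow R/I^{t}$. Since $H^i_{\fm}(R/I^t)$ is Artinian and its dual finitely generated, applying $(-)^{\vee}$ identifies the inverse system $\{H^i_{\fm}(R/I^t)\}_t$ with $\{\Ext^{\,n+1-i}_R(R/I^t,\omega_R)^{\vee}\}_t$ and interchanges $\vpl$ with the direct limit of the $\Ext$'s, so
\[
\vpl_t H^i_{\fm}(R/I^t)\ \cong\ \Bigl(\vil_t\,\Ext^{\,n+1-i}_R(R/I^t,\omega_R)\Bigr)^{\vee}\ \cong\ H^{\,n+1-i}_I(\omega_R)^{\vee}\ \cong\ H^{\,n+1-i}_I(R)^{\vee}(n+1).
\]
As $I$ is homogeneous, a nonzero $H^{\,n+1-i}_I(R)$ is a graded module with $\fm$ in its support, hence has nonzero Matlis dual; thus $\vpl_t H^i_{\fm}(R/I^t)\neq 0$ exactly when $H^{\,n+1-i}_I(R)\neq 0$, and taking the least such $i$,
\[
\fgrade(I,R)\ =\ (n+1)-\cd(R,I).
\]
This step is purely homological (this graded duality is well known in the theory of formal local cohomology) and uses nothing about $Y$.

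\smallskip

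\emph{Step 2: cohomological dimension via de Rham depth.} It remains to show $\cd(R,I)=(n+1)-\drdepth Y$. Here characteristic zero and the nonsingularity of $Y$ enter: the modules $H^j_I(R)$ are holonomic $D_R$-modules, and for $Y$ nonsingular their vanishing is governed by the algebraic de Rham cohomology of $Y$. I would invoke Ogus's local duality for algebraic de Rham cohomology together with his computation of the de Rham depth of a smooth projective variety (and of its cone), which expresses $\drdepth Y$ through the maps $H^i_{\mathrm{dR}}(\mathbb P^n_k)\to H^i_{\mathrm{dR}}(Y)$; in the nonsingular case Ogus's theorem yields precisely $\cd(R,I)=(n+1)-\drdepth Y$. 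Combined with Step 1 this gives $\fgrade(I,R)=\drdepth Y$.

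\smallskip

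\emph{The main obstacle.} All of the difficulty lies in Step 2. The delicate point is that $C(Y)=\Spec(R/I)$ is singular at its vertex unless $Y$ is a linear subspace, so one cannot feed $C(Y)$ into Ogus's smooth-variety machinery directly; one has to compare $C(Y)$ with the nonsingular $Y$ along the $\mathbb G_m$-fibration $C(Y)\setminus\{0\}\to Y$, or else appeal to Ogus's intrinsically projective statement relating $\cd(\mathbb P^n_k\setminus Y)$ (which differs from $\cd(R,I)$ by $1$) to the algebraic de Rham cohomology of $Y$ and of $\mathbb P^n_k$. It is precisely the nonsingularity of $Y$ that makes this comparison work and identifies $\cd(R,I)$ with $(n+1)-\drdepth Y$; for singular $Y$ the de Rham cohomology of the cone need not reflect $\drdepth Y$. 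The remaining points — justifying the interchange of $\vpl$ with Matlis duality, matching the various definitions of de Rham depth in the literature, and bookkeeping of twists and of the shift by $1$ — are routine.
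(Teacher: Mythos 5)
Your Step 1 is fine and coincides with the paper's Remark 3.1: graded/local duality turns $\vpl H^i_{\fm}(R/I^t)$ into the Matlis dual of $H^{n+1-i}_I(R)$, so $\fgrade(I,R)=(n+1)-\cd(R,I)$, and this uses nothing about $Y$. The problem is that your Step 2, which you yourself identify as containing all the difficulty, is not an argument but a promissory note, and the route you gesture at does not exist in the form you describe. There is no single theorem of Ogus stating that for nonsingular $Y$ one has $\cd(R,I)=(n+1)-\drdepth Y$, nor does Ogus ``express $\drdepth Y$ through the maps $H^i_{dR}(\mathbb P^n_k)\to H^i_{dR}(Y)$'': in Ogus's Theorem 4.4 the de Rham depth of $Y$ and the behaviour of the restriction maps on de Rham cohomology are two \emph{independent} conditions whose conjunction characterizes the cohomological dimension of $\mathbb P^n_k\setminus Y$, so from that global statement alone the de Rham depth does not determine $\cd(R,I)$, and one direction of the desired equality cannot be extracted from it. This is precisely why the paper does not argue the way you propose.

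The paper's actual mechanism, which is missing from your sketch, splits the equality into two inequalities proved by different results of Ogus. For $\drdepth Y\geq\fgrade(I,R)$ (which needs no nonsingularity) it deduces, from $H^i_I(R)=0$ for $i>\cd(R,I)$, the vanishing of $H^i(\mathbb P^n\setminus Y,\mathcal F)$ for all quasi-coherent $\mathcal F$ in the relevant range, using that the punctured affine cone maps affinely onto $\mathbb P^n\setminus Y$, and then applies only the one implication of Theorem 4.4 in which the restriction-map condition is harmless (after reducing to $k=\mathbb C$ via the paper's Remarks 3.1 and 3.2, a reduction you also gloss over). For the reverse inequality it invokes Ogus's \emph{local} Theorem 2.8, whose hypothesis is that $\Supp H^i_I(R)\subseteq\{\fm\}$ for all $i>\Ht(I)$; this punctual-support condition is exactly where the nonsingularity of $Y$ enters: for a prime $\fp\neq\fm$ containing $I$, the ideal $IR_{\fp}$ is a complete intersection, so $(H^i_I(R))_{\fp}\cong H^i_{IR_{\fp}}(R_{\fp})=0$ for $i>\Ht(I)$. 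Your proposal never isolates this support condition, never explains how smoothness of $Y$ (equivalently, of the cone away from its vertex) is to be used, and instead appeals to a comparison ``along the $\mathbb G_m$-fibration'' or to an unspecified theorem of Ogus that would do the whole job; identifying and verifying the hypothesis of the local criterion is the substantive content of the proof, so the gap is genuine.
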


\section{Preliminaries}

Let $Y$ be a scheme of finite type over a field $k$ of characteristic $0$ which admits an embedding as a closed subscheme
of a smooth scheme $X$. We denote by ${\Omega}^p_X$ the sheaf of $p$-differential forms on $X$ over $k$. With these notations in mind we define the de Rham complex of $X$ as the complex of sheaves of differential forms
 $${\Omega}^{\bullet}_{X}: \mathcal{O}_X \ \ \underrightarrow{d} \ \ {\Omega}^{1}_{X} \ \ \underrightarrow{d} \ \ {\Omega}^{2}_{X} \ \ \underrightarrow{d} \ \ \ldots.$$
 De Rham cohomolgy of $Y$ is defined as hypercohomology of the formal completion $\hat{\Omega}^{\bullet}_{X}$ of the de Rham complex of $X$ along $Y$:
$$H^i_{dR}(Y/k)=\mathcal{H}^i(\hat{X},\hat{\Omega}^{\bullet}_{X}),$$
see \cite{Gr66} and \cite{Ha75} for more details. If there is no ambiguity on the field $k$ we use $H^i_{dR}(Y)$ instead of $H^i_{dR}(Y/k)$.

\begin{definition}\label{0.1}
Let $A$ be a complete local ring with coefficient field $k$ of
characteristic zero. Let $\pi :R \twoheadrightarrow A$ be a surjection of
$k$-algebras where $R=k[[x_1, \ldots , x_n]]$ for some $n$ and let
$Y \hookrightarrow X$ (where $Y=\Spec A, X=\Spec R$) be the
corresponding closed immersion. Let $P \in Y$ be the closed
point. The (local) de Rham cohomology of $Y$ is defined by
$$ H^i_{P, dR}(Y)=\mathcal{H}^i_{P}(\hat{X},\hat{\Omega}^{\bullet}_{X}),\ \text{\ for\ all\ } i.$$
\end{definition}

by \cite[III. Proposition 3.1]{Ha75} we have 
$$ H^i_{P, dR}(Y) =\mathcal{H}^i_{P}(\hat{X},\hat{\Omega}^{\bullet}_{X}) \simeq H^i_{P, dR}(\Spec \hat{\mathcal{O}}_{X,P})$$ as
$k$-spaces for all $i$.

It is noteworthy to mention that $H^i_{dR}(Y)$ and $ H^i_{P, dR}(Y)$ are finite dimensional $k$-vector spaces for all $i$.

\begin{definition}\label{0.2}
Let $Y$ be a Noetherian scheme of equicharacteristic zero. The
"\emph{de Rham depth} of $Y$" which is abbreviated by $\drdepth Y$
is $\geq d$ if and only if for each (not necessarily closed) point
$y \in Y$,
$$H^i_{y,dR}(Y)=0, \  i < d- \dim \overline{\{y\}},$$
where $d$ is an integer and $\overline{\{y\}}$ denotes the closure
of $\{y\}$.
\end{definition}
Here, and by our assumption on the scheme $Y \subset \mathbb{P}^n_k$ we may suppose that $y$ is a closed point of $Y$. See \cite[pp. 340]{Og}.

We conclude this section with some review on the concept of formal grade which is the index of the minimal nonvanishing formal cohomology module, $\vpl H^i_{\fm}(R/I^t)$, where $R$ is a local ring with unique maximal ideal $\fm$ and $I$ is an ideal of $R$.
Recall that $H^i_{\fm}(-)$ is the local cohomology functor with support in $V(\fm)$. Notice that  local cohomology module of $R$ with support in $V(I)$ is defined as $H^i_{I}(R)=\vil \Ext^i_R (R/I^t,R)$. For the reqular local ring $R$, one can interpret the last non vanishing index of $H^i_{I}(R)$, which is known as cohomological dimension of $I$ in $R$, by using $\fgrade (I,R)$. We denote the cohomological dimension of $I$ in $R$ with $\cd(R,I)$, see Remark \ref{0.8} below. For more
information on this topic we refer the reader to \cite{Br-Sh}.

\section{Proof of Theorem 1.1}

\begin{remark} \label{0.8} Let $R=k[x_0, \ldots, x_n]$ be a polynomial ring over a field $k$ and $\fm=(x_0, \ldots, x_n)$ be its homogeneous maximal ideal. Note that $k = R/\fm$.  Using a suitable gonflement of $R$ (cf. \cite[Chapter IX, Appendice 2]{Bo}) one can take a faithfully flat local homomorphism $f: R \rightarrow S$ with $f(\fm)=\fn$ from $(R,\fm)$ to a regular local ring
$(S,\fn)$ such that $S/\fn$ is the algebraic closure of $k$. Because of faithfully flatness, $S$ contains a field.
Now note that by local duality \cite[11.2.5]{Br-Sh} $$ \vpl H^i_{\fn}(S/(IS)^t) \simeq \vpl \Hom_S( \Ext^{\dim S-i}(S/(IS)^t,S),E_S)$$
and 
$$\Hom_S (\vil \Ext^{\dim S-i}(S/(IS)^t,S),E_S) \simeq \Hom_S (H^{\dim S-i}_{IS}(S),E_S).$$
This shows that $\fgrade (IS,S)=\dim S- \cd (S,IS)$. On the other hand, by
 flat base change theorem \cite{Br-Sh}, $\ \cd (S,IS)= \cd (R,I)$. Hence, we conclude that $\fgrade (I,R) =\fgrade (IS,S)$. 
\end{remark}

\begin{remark} \label{0.9} Suppose that $\mathbb{Q} \subseteq k_0 \subseteq k$, where $k$ is a field extension of a field $k_0$. Let $X_0$ be a smooth scheme defined over $k_0$. Let $X:=X_0 \times_{k_0} k$, by \cite[III, Section 5]{Ha75}, it implies the natural isomorphism between algebraic de Rham cohomology groups:
$$H^i_{dR}(X/k) \simeq H^i_{dR}(X_0/k_0) \otimes_{k_0} k,\ \ i \in \mathbb{Z},$$
as $k$ is faithfully flat over $k_0$.
\end{remark}



\begin{proposition}\label{2}
Let $Y \subset \mathbb{P}^n_k$ be a proper closed subset of projective $n$-space over a field $k$ of characteristic zero and
let $I \subset R=k[x_0, \ldots, x_n]$ be the homogeneous defining
ideal of $Y$. Then
 $\drdepth Y \geq \fgrade (I,R)$.
\end{proposition}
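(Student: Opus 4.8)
\emph{Proof proposal.} The plan is to route the inequality through the cohomological dimension $\cd(R,I)$: first rewrite $\fgrade(I,R)$ using Remark \ref{0.8}, and then invoke the implication contained in Ogus's theorem that a small cohomological dimension forces a large de Rham depth.

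First I would reduce to the case where $k$ is algebraically closed. This does not affect $\fgrade(I,R)$: it is preserved by the faithfully flat extensions of Remark \ref{0.8}, where it is moreover identified as $\fgrade(I,R)=\dim R-\cd(R,I)=(n+1)-\cd(R,I)$. Nor does it affect $\drdepth Y$: by Remark \ref{0.9} the (local) de Rham cohomology base-changes along $k\to\overline k$, and one checks that this base change commutes with completing the local rings $\mathcal{O}_{Y,y}$ at points $y$ of $Y$, so that the vanishing conditions defining $\drdepth Y$ are unchanged. After this reduction the statement to prove is $\drdepth Y\ge (n+1)-\cd(R,I)$.

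Next I would apply Ogus's theorem \cite{Og}. In equicharacteristic zero it provides, for the homogeneous ideal $I$ of $Y\subset\mathbb{P}^n$ in $R$, an equivalence between the bound $\cd(R,I)\le (n+1)-c$ and the conjunction of $\drdepth Y\ge c$ with a comparison condition on the restriction maps $H^i_{dR}(\mathbb{P}^n)\to H^i_{dR}(Y)$ in low degrees; this is the graded, projective form of Ogus's comparison between local cohomological dimension and de Rham depth (equivalently one may pass to the affine cone $\Spec(R/I)\subset\mathbb{A}^{n+1}$, whose local cohomological dimension is $\cd(R,I)$). Dropping the comparison condition leaves the implication $\cd(R,I)\le (n+1)-c\ \Longrightarrow\ \drdepth Y\ge c$. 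Taking $c:=(n+1)-\cd(R,I)=\fgrade(I,R)$, the hypothesis $\cd(R,I)\le (n+1)-c$ holds with equality, so the implication yields $\drdepth Y\ge c=\fgrade(I,R)$, which is the assertion.

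The step I expect to be the main obstacle is locating and correctly normalizing the relevant form of Ogus's theorem — in particular the exact degree in which the hypothesis $\cd(R,I)\le (n+1)-c$ must be fed in, and the point that only this one implication is needed, so that no nonsingularity hypothesis on $Y$ is required. That is precisely why the reverse inequality, and hence the equality in Theorem \ref{1}, does demand that $Y$ be nonsingular. A secondary point needing care is the base-change compatibility invoked in the first step, which must be verified for \emph{local} de Rham cohomology and completed local rings, not merely for de Rham cohomology as in Remark \ref{0.9}.
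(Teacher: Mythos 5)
Your proposal is correct and follows essentially the same route as the paper: reduce to an algebraically closed (in the paper, $\mathbb{C}$) base field via Remarks \ref{0.8} and \ref{0.9}, convert $\fgrade(I,R)$ into the bound $\cd(R,I)\le \dim R-\fgrade(I,R)$ by local duality, and conclude with the one implication of Ogus's theorem (small cohomological dimension forces large de Rham depth), discarding the de Rham comparison condition. The only difference is cosmetic: the paper makes explicit the intermediate step you leave parenthetical, namely passing through quasi-coherent cohomology vanishing on $\mathbb{A}^{n+1}_{\mathbb{C}}-C(Y)$ and then, via the affine morphism to $\mathbb{P}^n_{\mathbb{C}}-Y$, into the exact hypotheses of \cite[Theorem 4.4]{Og}.
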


\begin{proof}

By virtue of Remark \ref{0.8} and \ref{0.9}, we may reduce the question to the case $k=\mathbb{C}$. Put $\fgrade (I,R)=u$.
Thus, $\vpl H^j_{\fm}(R/I^t) =0$ for all $j <u$. By using local duality \cite[11.2.5]{Br-Sh} and taking account that the inverse limit
commutes with direct limit in the first place of $\Hom_R(-,-)$ we observe that $H^i_{I}(R) =0$ for all $i > \dim R -u$.
 As $\dim R -u >0$ then $H^{i}(\mathbb{A}^{n}_{\mathbb{C}}-C(Y),\mathcal{F})=0$ for all
$i \geq \dim R -u$ and all quasi-coherent sheaf $\mathcal{F}$ on $\mathbb{A}^{n}_{\mathbb{C}}-C(Y)$, where $\mathbb{A}^{n}_{\mathbb{C}}$ and $C(Y)$ are,
respectively, the affine cones over $\mathbb{P}^n_{\mathbb{C}}$ and
$Y$. Since the natural morphism $\mathbb{A}^{n}_{\mathbb{C}}-C(Y) \rightarrow \mathbb{P}^{n}_{\mathbb{C}}-Y$ is affine, it follows that $H^{i}(\mathbb{P}^{n}_{\mathbb{C}}-Y,\mathcal{F})=0$ for all $i \geq \dim R -u$ and all quasi-coherent sheaf $\mathcal{F}$ on $\mathbb{P}^{n}_{\mathbb{C}}-Y$. By virtue of \cite[Theorem
4.4]{Og} we have $\drdepth Y \geq t$, as required.
\end{proof}

\textbf{Proof of Theorem \ref{1}.}
 
The Proposition \ref{2} ensures that  $\drdepth Y \geq \fgrade (I,R)$.
To prove converse direction, note that by \cite[Theorem 2.8]{Og}, it is enough
to show that $\Supp H^i_I(R) \subseteq \{\fm \}$ for all $i > \Ht (I)$. If $\fp$ is a prime ideal not containing $I$, it is clear
that $(H^i_I(R))_{\fp}=0$. Hence, suppose that $\fp \neq \fm$ is a
prime ideal containing $I$. As $Y$ is nonsingular, the ideal
$IR_{\fp}$ is a complete intersection in $R_{\fp}$. Then, it follows
that $(H^i_I(R))_{\fp} \cong H^i_{IR_{\fp}}(R_{\fp}) =0$ for all
$i>\Ht (I)$.


\begin{thebibliography}{10}


\bibitem {Bo}
N. Bourbaki, \emph{El'ements de math'ematique. Alg'ebre commutative.}, Chapters 8 et 9, Springer, 2008.

\bibitem {Br-Sh}
M. P. Brodmann and R. Y. Sharp, \emph{Local cohomology: an algebraic introduction with geometric applications}, Second Edition. Cambridge Studies in Advanced Mathematics, 136. Cambridge University Press, Cambridge, 2013.

\bibitem{E}
M. Eghbali, \emph{A note on the use of Frobenius map and D-modules in local cohomology.}, Comm. in algebra, {\bf 46}(2), 
  851--862, (2018).
 
\bibitem{Gr66}
 A. Grothendieck, \emph{On the De Rham cohomology of algebraic varieties.}, Publ. Math. I.H.E.S., {\bf 29}, 95--103 (1966).


\bibitem{Ha75}
R. Hartshorne, \emph{On the de Rham cohomology of algebraic varieties}, Inst. Hautes Etudes Sci. ´
Publ. Math {\bf 45}, 5--99 (1975).

 \bibitem{Og}
A. Ogus, \emph{Local cohomological dimension of algebraic varieties.}, 
 Ann. of Math. ,  {\bf 98}(2), 327--365 (1973).

\end{thebibliography}
\end{document}